\newcommand{\E}{\mathbb E}
\newcommand{\R}{\mathbb{R}}
\newcommand{\N}{\mathbb{N}}
\newcommand{\Z}{\mathbb{Z}}
\renewcommand{\P}{\mathbb{P}}
\newcommand{\Var}{\mathop{\mathrm{Var}}\nolimits}
\newcommand{\NN}{\mathcal{N}}
\newcommand{\FF}{\mathcal{F}}
\newcommand{\eps}{\varepsilon}
\newcommand{\todistr}{\stackrel{\mathrm{w}}{\to}}
\theoremstyle{plain}
\newtheorem{theorem}{Theorem}[]
\newtheorem{lemma}{Lemma}[]
\newtheorem{corollary}{Corollary}[]
\theoremstyle{definition}
\newtheorem{example}{Example}[]
\newtheorem{remark}{Remark}[]
\newtheorem{ass}{Assumption}[]
\theoremstyle{remark}
\def\crl#1{\crcr\noalign{\unpenalty\penalty 10000
\nointerlineskip \vbox to 0pt {
\dimen0=\lineskip \vskip \dimen0 minus 1000pt \hbox to \displaywidth{%
\hfil \refstepcounter{equation}\label{#1}(\theequation)}
\vskip 0pt minus 1000pt} \penalty 10000}}
\begin{document}

\title[Sums of independent products]{Limit laws for sums of independent random products: the lattice case}
\author{Zakhar Kabluchko}
\keywords{Random products, random exponentials, semi-stable laws, random energy model,  triangular arrays, central limit theorem}
\subjclass[2000]{Primary, 60G50; Secondary, 60F05, 60F10}
\address{Institute of Stochastics, Ulm University, Helmholtzstr.\ 18, 89069 Ulm, Germany}
\begin{abstract}
Let $\{V_{i,j}; (i,j)\in\N^2\}$ be a two-dimensional array of i.i.d.\ random variables. The limit laws of the sum of independent random products
$$
Z_n=\sum_{i=1}^{N_n} \prod_{j=1}^{n} e^{V_{i,j}}
$$
as $n,N_n\to\infty$ have been investigated by a number of authors.
Depending on the growth rate of $N_n$, the random variable $Z_n$ obeys a central limit theorem, or has limiting $\alpha$-stable distribution.
The latter result is true for non-lattice $V_{i,j}$ only. Our aim is to study the lattice case. We prove that although the (suitably normalized) sequence $Z_n$ fails to converge in distribution, it is relatively compact in the weak topology, and describe its cluster set. This set is a topological circle consisting of semi-stable distributions.
\end{abstract}
\maketitle

\section{Introduction and statement of results}\label{sec:intro}
Let $\{V_{i,j}; (i,j)\in\N^2\}$ be a two-dimensional array of independent copies of a real-valued random variable $V$. Our main object of interest is the \textit{sum of independent random products}
\begin{equation}\label{eq:def_Zn}
Z_n=\sum_{i=1}^{N_n} \prod_{j=1}^{n} e^{V_{i,j}}.
\end{equation}
Here, $N_n$ is a sequence of positive integers converging to $\infty$. The limit laws of the random variable $Z_n$ as $n, N_n\to\infty$ have been studied by~\citet{bovier_etal02} for Gaussian $V$ (see Theorems~1.5, 1.6 therein), and by~\citet{cranston_molchanov05} for arbitrary $V$ with finite exponential moments. The study of $Z_n$ is motivated by a number of models in statistical physics. To mention only one example, if $V_{i,j}$ are Gaussian variables, then $Z_n$ is the partition function of the random energy model; see~\cite{bovier_etal02}.  The character of the limiting distribution of $Z_n$ depends on the growth rate of the sequence $N_n$. If the sequence $N_n$ grows fast in the sense that
\begin{equation}\label{eq:asympt_normal_conv}
\liminf_{n\to\infty} \frac1 n\log N_n >\lambda_2
\end{equation}
for some critical value $\lambda_2>0$ depending only on the distribution of $V$, then the random variable $Z_n$ obeys a central limit theorem with the usual normalization:
\begin{equation}\label{eq:conv_normal}
\frac{Z_n-\E Z_n}{\sqrt{\Var Z_n}}
\todistr \NN(0, 1), \;\;\; n\to\infty.
\end{equation}
If the sequence $N_n$ grows slowly in the sense that
\begin{equation}\label{eq:asympt_N}
\lambda:=\lim_{n\to\infty}\frac1n\log N_n\in (0,\lambda_2),
\end{equation}
then the central limit theorem breaks down. Instead, for suitable normalizing sequences $A_n$, $B_n$, we have
\begin{equation}\label{eq:conv_stable}
\frac{Z_n-A_n}{B_n}\todistr \FF_{\alpha}, \;\;\; n\to\infty,
\end{equation}
where $\FF_{\alpha}$ is an $\alpha$-stable distribution totally skewed to the right, and the stability parameter $\alpha\in (0,2)$ depends on $\lambda$. The proofs of~\eqref{eq:conv_normal} and~\eqref{eq:conv_stable} can be found in~\cite{cranston_molchanov05}. Unaware of~\cite{cranston_molchanov05}, the author proved essentially the same results in~\cite{kabluchko09b}. A functional version of these results can be found in~\cite{kabluchko09d}. There is also a transition between the two regimes~\eqref{eq:asympt_normal_conv} and~\eqref{eq:asympt_N} taking place at $\log N_n\approx \lambda_2 n$; see~\cite[Thm.~1.5(ii)]{bovier_etal02}, \cite[Thm.~1.2]{cranston_molchanov05}, as well as~\cite[Thm.~1.3]{kabluchko09d}.

In their proof of the stable limit law~\eqref{eq:conv_stable}, \citet{cranston_molchanov05} relied on an asymptotic expansion in the central limit theorem; see Theorem~1 on page~210 in~\cite{gnedenko_book54}. It has been overlooked in~\cite{cranston_molchanov05} that this result is true for \textit{non-lattice} distributions only; see page~212 in~\cite{gnedenko_book54} for a discussion of this fact. Recall that a random variable $V$ is called \textit{lattice} if there exist $h,a\in\R$ such that the values of $V$ are a.s.\ of the form $hn+a$, $n\in\Z$.

Our aim  is to investigate the lattice case. On the one hand, we will see that in this case the convergence to an $\alpha$-stable law breaks down. More precisely, there is no affine normalization which makes the sequence of random variables $Z_n$ weakly convergent. On the other hand, we will prove that for suitable $A_n$ and $B_n$, the sequence of random variables $(Z_n-A_n)/B_n$ is relatively compact in the weak topology and describe the set of weak accumulation points for this sequence. This set is a topological circle consisting of semi-stable distributions.


Let us state our results precisely.  Let $V$ be a non-degenerate random variable satisfying the Cram\'er condition
\begin{equation}\label{eq:def_varphi}
\psi(t):=\log \E e^{tV}<+\infty \text{ for all } t\in \R.
\end{equation}
Let  $I:\R\to [0,+\infty]$  be the Legendre--Fenchel transform of $\psi$ given by
\begin{equation}\label{eq:def_I}
I(\beta):=\sup_{t\in \R} (\beta t-\psi(t)), \;\;\; \beta\in\R.
\end{equation}
Following~\cite{cranston_molchanov05}, define the ``critical points'' $\lambda_1$ and $\lambda_2$, $0<\lambda_1<\lambda_2$, by
\begin{equation}\label{eq:def_c1_c2}
\lambda_1:=\psi'(1)-\psi(1),\;\;\; \lambda_2:=2\psi'(2)-\psi(2).
\end{equation}
We assume that the distribution of the random variable $V$ is lattice. Since $Z_n$ changes only by a constant factor $e^{-na}$ if we replace $V$ by $V-a$, there is no restriction of generality in making the following assumption.
\begin{ass}\label{ass:non_lattice}
There is $h>0$ such that the values of $V$ belong with probability $1$ to the lattice $h\Z=\{hn; n\in\Z\}$, and, moreover, $h$ is the largest number with this property.
\end{ass}
It will be convenient to denote by $[b]_h$ and $\{b\}_h$ the entire part and the fractional part of $b\in\R$ taken with respect to the lattice $h\Z$, i.e.
$$
[b]_h:=\max\{a\in h\Z: a\leq b\},\;\;\; \{b\}_h:=b-[b]_h\in [0,h).
$$
Note that $[b]_1$ and $\{b\}_1$  are the usual integer and fractional parts of $b$.
The next theorem is our main result.
\begin{theorem}\label{theo:stable_lattice}
Suppose that~\eqref{eq:asympt_N}, \eqref{eq:def_varphi} and Assumption~\ref{ass:non_lattice} are satisfied.
Define $\alpha\in (0,2)$ as the unique solution of the equation $\alpha\psi'(\alpha)-\psi(\alpha)=\lambda$.
Define $A_n$ and $B_n$ by
\begin{align}
A_n&=
\begin{cases}
0,     &\textrm{ if }\lambda\in (0,\lambda_1),\\
N_n \E[e^{\sum_{j=1}^n V_{1,j}}1_{\sum_{j=1}^n V_{1,j}< b_n}], &\textrm{ if }    \lambda=\lambda_1,\\
\E Z_n, & \textrm{ if }\lambda\in (\lambda_1,\lambda_2),
\end{cases}\label{eq:def_an_lattice}\\
B_n&=e^{b_n}, \text{ where }
b_n=nI^{-1}\left(\frac1n\log \left(\frac{ N_nh}{\sqrt{2\pi\psi''(\alpha) n}}\right)\right).\label{eq:def_bn_lattice}
\end{align}
If $\{n_k\}_{k\in\N}$ is an increasing integer sequence such that
\begin{equation}\label{eq:def_Delta}
\Delta:=\lim_{k\to\infty}\{b_{n_k}\}_h\in [0,h],
\end{equation}
then we have the following weak convergence:
\begin{equation}\label{eq:main_conv}
\frac{Z_{n_k}-A_{n_k}}{B_{n_k}}\todistr \FF_{\alpha, \Delta}, \;\;\; k\to\infty.
\end{equation}
Here, $\FF_{\alpha, \Delta}$ is an infinitely divisible distribution whose characteristic function $\phi_{\alpha,\Delta}$ has a L\'evy--Khintchine representation
\begin{equation}\label{eq:char_func}
\log \phi_{\alpha,\Delta}(u)=
iC_{\alpha,\Delta;\tau}u + \sum_{x\in  e^{h\Z-\Delta}}\left(e^{iux}-1- iux 1_{x<\tau}\right) x^{-\alpha}, \;\;\; u\in\R,
\end{equation}
where $e^{h\Z-\Delta}$ denotes the geometric progression $\{e^{hn-\Delta}; n\in\Z\}$, $\tau>0$ is arbitrary such that $\tau\notin e^{h\Z-\Delta}$, and $C_{\alpha,\Delta;\tau}$ is a constant.
\end{theorem}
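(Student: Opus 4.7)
The plan is to apply the Gnedenko--Kolmogorov limit theorem for triangular arrays of row-wise i.i.d.\ infinitesimal summands to the variables $Y_{n,i}:=e^{S_n^{(i)}-b_n}$, where $S_n^{(i)}:=\sum_{j=1}^n V_{i,j}$, so that $(Z_n-A_n)/B_n=\sum_{i=1}^{N_n}Y_{n,i}-A_n/B_n$. By Assumption~\ref{ass:non_lattice} each $Y_{n,i}$ is supported on the geometric progression $e^{h\Z-\{b_n\}_h}$ (since $h\Z-[b_n]_h=h\Z$), and along the subsequence $n_k$ with $\{b_{n_k}\}_h\to\Delta$ this support converges to $e^{h\Z-\Delta}$, matching the support of the L\'evy measure $\nu(dx):=\sum_{x_0\in e^{h\Z-\Delta}}x_0^{-\alpha}\delta_{x_0}(dx)$ implicit in~\eqref{eq:char_func}. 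The problem thus reduces to three standard items: (i) vague convergence $N_{n_k}\P(Y_{n_k,1}\in\cdot)\to\nu$ on $(0,\infty)$; (ii) absence of a Gaussian component, i.e.\ $\lim_{\tau\downarrow 0}\limsup_k N_{n_k}\Var(Y_{n_k,1}1_{Y_{n_k,1}\leq\tau})=0$; (iii) the centering $A_n/B_n$ matches the drift in~\eqref{eq:char_func}. The local limit theorem for lattice sums plays here the role that the asymptotic expansion in the CLT played for Cranston and Molchanov in the non-lattice setting.

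The core of the proof is (i). For an atom $y=e^{hm-\{b_n\}_h}$ of $Y_{n,1}$ one has $\P(Y_{n,1}=y)=\P(S_n=hm+[b_n]_h)$, and Gnedenko's local limit theorem together with a Cram\'er tilt $t_n$ defined by $\psi'(t_n)=(hm+[b_n]_h)/n$ yields, uniformly in a compact neighborhood of $\psi'(\alpha)$,
$$
\P(S_n=hm+[b_n]_h)=\frac{h\,e^{-nI((hm+[b_n]_h)/n)}}{\sqrt{2\pi\psi''(t_n)n}}(1+o(1)).
$$
Since $n^{-1}\log N_n\to\lambda=I(\psi'(\alpha))$, the definition of $b_n$ forces $b_n/n\to\psi'(\alpha)$ and $N_nh/\sqrt{2\pi\psi''(\alpha)n}=e^{nI(b_n/n)}$. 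Writing $S_n=b_n+\log y$, a first-order Taylor expansion $n[I((b_n+\log y)/n)-I(b_n/n)]=I'(b_n/n)\log y+o(1)\to\alpha\log y$ (using $I'(\psi'(\alpha))=\alpha$) then gives $N_n\P(Y_{n,1}=y)=y^{-\alpha}(1+o(1))$ uniformly on compact sets of $(0,\infty)$. Vague convergence on any $(\eps,M)$ follows along $n_k$; the tail for large $y$ is dominated by a Chernoff bound from~\eqref{eq:def_varphi} decaying faster than $y^{-\alpha}$.

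For (ii), summing the same local-limit asymptotic yields $N_n\E[Y_{n,1}^2 1_{Y_{n,1}\leq\tau}]\to\int_0^\tau x^2\nu(dx)=\sum_{k:\,e^{hk-\Delta}\leq\tau}e^{(2-\alpha)(hk-\Delta)}$, which is finite (because $\alpha<2$) and vanishes as $\tau\downarrow 0$. Item (iii) is bookkeeping in three regimes. When $\alpha\in(0,1)$, i.e.\ $\lambda<\lambda_1$, the L\'evy measure has integrable identity at $0$, so no centering is needed and the drift discrepancy with~\eqref{eq:char_func} is absorbed into $C_{\alpha,\Delta;\tau}$. When $\alpha\in(1,2)$, i.e.\ $\lambda>\lambda_1$, centering by $\E Z_n$ works since $\E Y_{n,1}$ is finite for each $n$, and in the limit the uncompensated L\'evy drift $iux$ equals the truncated drift $iux1_{x<\tau}$ modulo the finite constant $\sum_{x_0\geq\tau}x_0^{1-\alpha}$ absorbed into $C_{\alpha,\Delta;\tau}$. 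The borderline $\alpha=1$ forces the truncation $S_n<b_n$ built into~\eqref{eq:def_an_lattice}, matched to the L\'evy truncation at $\tau\sim 1$.

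The main obstacle is securing uniformity of the LCLT--Cram\'er asymptotic across the full range of atoms relevant to (i) and (ii). Near the centre $\psi'(\alpha)$ the tilt produces the clean $y^{-\alpha}$ law, but establishing vague convergence on every $(\eps,M)\subset(0,\infty)$ and bounding the truncated variance down to $0$ require refined Cram\'er-type large deviation estimates with error terms uniform in the tilt parameter; these are available here precisely because the Cram\'er condition~\eqref{eq:def_varphi} holds for \emph{all} real $t$, so every tilt is legitimate. A secondary delicate point is the case $\alpha=1$, where one must verify that the particular truncation at $b_n$ used in~\eqref{eq:def_an_lattice} agrees, up to a bounded constant depending only on $\tau$, with the Gnedenko--Kolmogorov centering at any fixed level $\tau\notin e^{h\Z-\Delta}$.
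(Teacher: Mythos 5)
Your proposal is correct and follows essentially the same route as the paper: a Gnedenko--Kolmogorov triangular-array argument reduced to the three standard conditions (Lévy measure, no Gaussian part, centering), each verified via the lattice local limit theorem of Bahadur--Rao/Petrov combined with Cramér tilting and the first-order Taylor expansion $nI((b_n+x)/n)-nI(b_n/n)\to\alpha x$. The uniformity and left-tail issues you flag are exactly the ones the paper resolves by tilting at $t_0=1$ and $t_0=2$ together with a convexity bound on $I$.
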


\begin{corollary}\label{cor:tight}
Under the assumptions of Theorem~\ref{theo:stable_lattice}, the sequence of random variables
\begin{equation}\label{eq:seq_Zn}
\left \{\frac{Z_n-A_n}{B_n}; n\in\N \right\}
\end{equation}
is relatively compact in the weak topology. The set of the weak accumulation points of the sequence~\eqref{eq:seq_Zn} is $\{\FF_{\alpha,\Delta}; \Delta\in[0,h]\}$.  Endowed with the induced weak topology, this set is homeomorphic to a circle.
\end{corollary}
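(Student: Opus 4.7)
The plan is to deduce Corollary~\ref{cor:tight} from Theorem~\ref{theo:stable_lattice} via a standard subsequence principle. Since $\{b_n\}_h$ lies in the compact interval $[0,h]$, every subsequence $\{n_k\}$ contains, by Bolzano--Weierstrass, a further subsequence $\{n_{k_j}\}$ with $\{b_{n_{k_j}}\}_h\to\Delta\in[0,h]$; Theorem~\ref{theo:stable_lattice} then gives $(Z_{n_{k_j}}-A_{n_{k_j}})/B_{n_{k_j}}\todistr\FF_{\alpha,\Delta}$. This simultaneously establishes relative compactness of the sequence~\eqref{eq:seq_Zn} in the weak topology and the inclusion of its set of weak accumulation points in $\{\FF_{\alpha,\Delta}:\Delta\in[0,h]\}$.

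For the reverse inclusion I would prove that $\{\{b_n\}_h:n\in\N\}$ is dense in $[0,h]$. A Taylor expansion of $I^{-1}$ at $\lambda$, together with the identities $I'(\beta)=\alpha$ and $\alpha\beta-\psi(\alpha)=\lambda$ (with $\beta=\psi'(\alpha)$) that are implicit in the definition of $\alpha$ in Theorem~\ref{theo:stable_lattice}, rewrites formula~\eqref{eq:def_bn_lattice} as
\[
 b_n \;=\; \frac{1}{\alpha}\Bigl(\log N_n + n\,\psi(\alpha) - \tfrac{1}{2}\log n\Bigr) + O(1).
\]
The leading increments $b_{n+1}-b_n$ tend to $\beta$, while the piece $-\tfrac{1}{2\alpha}\log n$ contributes an unbounded monotone drift with vanishing consecutive steps $O(1/n)$. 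Since any unbounded real sequence whose consecutive differences tend to zero is dense modulo any positive number, this slow logarithmic drift smears the fractional parts of $b_n$ over $[0,h]$, irrespective of whether the linear part $n\beta/\alpha$ is rationally related to $h$. Hence every $\FF_{\alpha,\Delta}$ is indeed a weak accumulation point.

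To conclude, I would verify that the map $\Delta\mapsto\FF_{\alpha,\Delta}$ is continuous (the Lévy measure in~\eqref{eq:char_func}, restricted to $\{|x|\ge\tau\}$, is a finite measure depending continuously on $\Delta$, and the drift $C_{\alpha,\Delta;\tau}$ can be chosen continuously in $\Delta$) and $h$-periodic (since $e^{h\Z-(\Delta+h)}=e^{h\Z-\Delta}$), while distinct cosets of $\Delta$ modulo $h$ yield distributions with distinct Lévy supports. The induced continuous bijection from the compact circle $\R/h\Z$ onto the accumulation set is then a homeomorphism. The main obstacle is the density step above: controlling the higher-order error in the expansion of $b_n$ is delicate because $\log N_n-\lambda n$ is only assumed to be $o(n)$, and some care is required to ensure that irregularities in $N_n$ cannot conspire with the linear and logarithmic drifts to confine $\{b_n\}_h$ to a proper closed subset of $[0,h]$.
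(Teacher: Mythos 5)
Your first step (Bolzano--Weierstrass on $\{b_n\}_h\subset[0,h]$ plus Theorem~\ref{theo:stable_lattice} applied along the extracted subsequence) is exactly the paper's argument for relative compactness and for the inclusion of the cluster set in $\{\FF_{\alpha,\Delta};\Delta\in[0,h]\}$, and your last step (continuity of $\Delta\mapsto\FF_{\alpha,\Delta}$ via the L\'evy--Khintchine data, $h$-periodicity from $e^{h\Z-\Delta-h}=e^{h\Z-\Delta}$, injectivity on $[0,h)$ from the L\'evy supports, and compactness of $\R/h\Z$) likewise matches the paper's treatment of the circle homeomorphism. The paper stops there: its proof of the ``description of the weak cluster set'' consists only of the extraction argument, i.e.\ only of the inclusion $\subseteq$.

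The genuine gap is in your density step, and the argument you sketch would fail. First, the expansion $b_n=\frac{1}{\alpha}\bigl(\log N_n+n\psi(\alpha)-\frac12\log n\bigr)+O(1)$ is not justified: writing $b_n=nI^{-1}(c_n)$, the second-order Taylor term is $n\cdot O((c_n-\lambda)^2)=O\bigl((\log N_n-\lambda n+O(\log n))^2/n\bigr)$, which under the sole hypothesis~\eqref{eq:asympt_N} (i.e.\ $\log N_n=\lambda n+o(n)$) need not be bounded. Second, and more seriously, the ``unbounded sequence with vanishing consecutive differences'' lemma applies to $-\frac{1}{2\alpha}\log n$ in isolation but not to $b_n$, whose consecutive differences tend to $\psi'(\alpha)\neq 0$; the term $\frac{1}{\alpha}\log N_n$ is essentially arbitrary subject to~\eqref{eq:asympt_N} and can cancel the logarithmic drift exactly. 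Concretely, taking $N_n=\bigl\lceil h^{-1}\sqrt{2\pi\psi''(\alpha)n}\,e^{nI(\beta_n)}\bigr\rceil$ with $n\beta_n\in h\Z$, $\beta_n\to\psi'(\alpha)$, one gets $c_n=I(\beta_n)+O(N_n^{-1})$ and hence $\{b_n\}_h\to 0$, so the fractional parts are not dense and the only accumulation point is $\FF_{\alpha,0}$. Thus the reverse inclusion you try to prove is not a consequence of the stated hypotheses for an arbitrary admissible $N_n$; the obstacle you flag at the end of your proposal is real and cannot be overcome without an additional equidistribution assumption on $b_n$ (or a reading of the corollary as asserting only the inclusion, which is all the paper's own proof delivers).
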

\begin{example}
Let the variable $V$ take two values $h$ and $0$ with probabilities $p$ and $1-p$, respectively, $p\in(0,1)$. In order to motivate this choice, consider a game in which a player with starting capital $1$  tosses  a coin $n$ times and each time the coin shows heads (which happens with probability $p$), the capital is multiplied by $e^{h}$. If the coin lands tails, the capital remains unchanged. With other words, the gain of the player in such a game is $e^{hk}$ if the coin lands $k$ times heads, $k=0,\ldots, n$. Then, the random variable $Z_n$ may be interpreted as the total gain in $N_n$ independent games. Theorem~\ref{theo:stable_lattice}  provides a complete description of the subsequential limit laws of $Z_n$ as $n,N_n\to\infty$ provided that the growth condition~\eqref{eq:asympt_N} is satisfied. The critical point $\lambda_2$ is given by
\begin{equation}
\lambda_2=\frac{2phe^{2h}}{(1-p)+pe^{2h}}-\log ((1-p)+pe^{2h}).
\end{equation}
It should be stressed that the central limit theorem~\eqref{eq:conv_normal} as well as the limit results in the intermediate regime $\log N_n\approx \lambda_2 n$ (see~\cite[Thm.~1.3]{kabluchko09d}) remain valid in the lattice case. Also, the strong laws for $Z_n$ proved in~\cite{khorunzhiy03}, \cite{cranston_molchanov05}, \cite{kabluchko09b} hold in the lattice case. Thus, it is only the weak convergence result under the growth condition~\eqref{eq:asympt_N} which is affected by the lattice assumption.
\end{example}
\begin{remark}
The distributions $\FF_{\alpha,\Delta}$ are \textit{semi-stable}. Recall that an infinitely divisible distribution on the real line with characteristic function $\phi$ is called \textit{semi-stable} with index $\alpha\in (0,2]$ if for \textit{some} positive $a\neq 1$, there exists $c\in\R$ such that $(\phi(u))^a=e^{ict}\phi(a^{1/\alpha}u)$; see~\cite[Ch.~3]{sato_book}. Stable distributions are obtained by requiring the same condition to hold for \textit{every} $a>0$ (with $c$ depending on $a$). Semi-stable distributions arise as subsequential weak limits of the partial sums of i.i.d.\ random variables taken along geometrically growing subsequences; see, e.g., \cite{pillai71}. This setting is applicable for example to the total gain in a large number of St.\ Petersburg games. Recall that in a St.\ Petersburg game, the gain of a player is $2^k$ with probability $2^{-k}$, $k\in\N$. If $S_N$ denotes the total gain in $N$ independent St.\ Petersburg games, then the random variable $S_N$ does not converge to a limiting distribution as $N\to\infty$. However, it has been observed  by~\citet{martin_lof85} that the \textit{subsequence} $S_{2^N}$ has a limiting distribution as $N\to\infty$. Later, the full picture of limiting semi-stable laws arising as subsequential limits of $S_n$ has been established in~\cite{csorgo_dodunekova91}. It is interesting to note that although the sum of independent products $Z_n$ does not fit in this setting (rather, $Z_n$ is a row sum in a triangular array), the structure of the set of limiting distributions is very similar to that encountered in~\cite{csorgo_dodunekova91}. In particular, the limiting distributions are semi-stable (and not only infinitely divisible, which is clear a priori).
\end{remark}
\begin{remark}
In~\eqref{eq:def_bn_lattice}, we agree to take the values of the inverse function $I^{-1}$ to be in the interval $(\beta_0,\beta_{+\infty})$. Note that by~\eqref{eq:asympt_N}, see also Eqn.~\eqref{eq:I_varphi_prime} below, we have $b_n\sim \psi'(\alpha) n$ as $n\to\infty$.
\end{remark}
\begin{remark}
The value of the constant $C_{\alpha, \Delta; \tau}$ is given by Eqns.~\eqref{eq:C_alpha_less1}, \eqref{eq:C_alpha_eq1}, \eqref{eq:C_alpha_larger1} below for $\alpha\in (0,1)$, $\alpha=1$, $\alpha\in (1,2)$, respectively. It is easily seen from these equations that the right-hand side of~\eqref{eq:char_func} does not depend on the choice of $\tau$.
\end{remark}
\begin{remark}
Our growth condition~\eqref{eq:asympt_N} is less restrictive than the corresponding assumption in~\cite{cranston_molchanov05}, where $N_n$ is chosen to be of the form $N_n=\alpha(2\pi \psi''(\alpha)n)^{1/2}e^{\lambda n}$. See~\cite[Theorem~1.4]{kabluchko09d} for the proof of~\eqref{eq:conv_stable} in the non-lattice case under~\eqref{eq:asympt_N}.
\end{remark}
A quantity closely related to the \textit{sum} of independent products $Z_n$ is the \textit{maximum} of independent products
\begin{equation}
M_n=\max_{i=1,\ldots,N_n} \prod_{j=1}^{n} e^{V_{i,j}}.
\end{equation}
Equivalently, one may consider the \textit{maximum of independent sums} 
$$
\log M_n=\max_{i=1,\ldots,N_n} \sum_{j=1}^{n} V_{i,j}.
$$ 
The limiting behavior of $\log M_n$ as $n\to\infty$ have been studied independently by a number of authors including~\cite{ivchenko73}, \cite{durrett79}, \cite{lifshits}, \cite{cranston_molchanov05}. Also, two versions of $\log M_n$ with an additional dependence between the sums $\sum_{j=1}^{n} V_{i,j}$, $i=1,\ldots,N_n$, have been studied in~\cite{komlos_tusnady75} (in the context of the Erd\"os--Renyi law of large numbers), and in~\cite[Chapter~3]{bollobas_book} (the maximal degree of a vertex in a random graph). Another related model is the maximum of the branching random walk; see, e.g., \cite{lifshits}. If the random variable $V$ is non-lattice with finite  exponential moments, then $\log M_n$ has limiting Gumbel extreme-value distribution function $e^{-e^{-x}}$. In the lattice case, the convergence to the Gumbel limit breaks down and instead, a family of discrete analogues of the Gumbel distribution appears as the set of the weak accumulation points.

The rest of the paper is devoted to the proof of Theorem~\ref{theo:stable_lattice}. Our approach  (which follows the idea used in~\cite{ben_arous_etal05} and~\cite{cranston_molchanov05}) is to view $Z_n$ as a row sum in a triangular array with independent rows and to apply the classical theory of convergence to infinitely divisible distributions. This results in a number of conditions on the truncated exponential moments which need to be verified. The verification is done using the precise large deviation theorems due to~\citet{bahadur_rao60} and~\citet{petrov65}.

\section{Proof of Theorem~\ref{theo:stable_lattice} and Corollary~\ref{cor:tight}}\label{sec:proof_stable_lat}

\subsection{Method of the proof}
Recall that $\{V_{i,j}; (i,j)\in \N^2\}$ and $\{V_j; j\in\N\}$ are independent copies of a random variable $V$ satisfying~\eqref{eq:def_varphi}. For every $n\in\N$, let $W_{1,n},\ldots, W_{N_n,n}$ and $W_n$ be i.i.d.\ random variables  defined by
\begin{equation}\label{eq:def_Wn}
W_{i,n}=e^{\sum_{j=1}^{n} V_{i,j}-b_n},\;\;\; i=1,\ldots, N_n; \;\;\; W_n=e^{\sum_{j=1}^n V_j-b_n}.
\end{equation}
With this notation, Eqn.~\eqref{eq:main_conv} of Theorem~\ref{theo:stable_lattice} is equivalent to the following statement:
\begin{equation}\label{eq:stable_latt_cond}
\sum_{i=1}^{N_{n_k}} W_{i,{n_k}}- B_{n_k}^{-1}A_{n_k}\todistr \FF_{\alpha, \Delta},\;\;\; k\to\infty.
\end{equation}
Note that $\{W_{i,n}; n\in\N, i=1,\ldots,N_n\}$ is a triangular array of positive-valued random variables and that the variables within the same row are independent of each other. By the standard theory of convergence to infinitely divisible distributions (see, e.g., Theorem~1 on page~116 in~\cite{gnedenko_book54}), the convergence in~\eqref{eq:stable_latt_cond} will be established once we have verified the validity of the following three statements:
\begin{enumerate}
\item For every  $\tau>0$ with $\tau \notin e^{h\Z-\Delta}$,
\begin{equation}\label{eq:stable_latt_cond1}
\lim_{k\to\infty} N_{n_k} \P[W_{n_k}>\tau]
=
\sum_{x\in e^{h\Z-\Delta}} x^{-\alpha}1_{x>\tau}.
\end{equation}
\item We have
\begin{equation}\label{eq:stable_latt_cond2}
\lim_{\tau\downarrow 0}\limsup_{k\to\infty} N_{n_k} \Var [W_{n_k} 1_{W_{n_k}\leq \tau}]=0.
\end{equation}
\item  For every $\tau>0$ with $\tau\notin e^{h\Z-\Delta}$, the following limit exists and is finite:
\begin{equation}\label{eq:stable_latt_cond3}
C_{\alpha,\Delta;\tau}:=\lim_{k\to\infty} (N_{n_k}\E[W_{n_k} 1_{W_{n_k}\leq \tau}]-B_{n_k}^{-1}A_{n_k}).
\end{equation}
\end{enumerate}
Note that the first condition identifies the L\'evy measure of the limiting distribution $\FF_{\alpha,\Delta}$, the second condition shows that there is no Gaussian part in the limit, and the last condition identifies the shift parameter in the L\'evy--Khintchine formula. The formula~\eqref{eq:char_func} for the characteristic function of the limiting distribution $\FF_{\alpha,\Delta}$ follows from Eqn.~(8) on page~84 of~\cite{gnedenko_book54}.

\subsection{Facts about large deviations}\label{sec:ld}
We collect some facts on large deviations for sums of independent random variables needed in the sequel. Let $\{V_i; i\in\N\}$ be i.i.d.\ copies of a random variable $V$ satisfying~\eqref{eq:def_varphi}, and let $S_n=V_1+\ldots+V_n$ be their partial sums.
Recall that
\begin{equation}\label{eq:def_varphi1}
\psi(t)=\log \E e^{tV},\;\;\; t\in\R,
\;\;\;
I(\beta)=\sup_{t\in \R} (\beta t-\psi(t)), \;\;\; \beta\in\R.
\end{equation}
Note that $\psi$ is infinitely differentiable, strictly convex, and $\psi(0)=0$. The function $I$ is finite, strictly convex and infinitely differentiable on the interval $(\beta_{-\infty}, \beta_{+\infty})$, and its unique zero is $\beta_0$, where
\begin{equation}\label{eq:def_beta_0}
\beta_{-\infty}:=\lim_{t\to -\infty} \psi'(t),\;\;\;
\beta_0:=\psi'(0)=\E V,\;\;\;
\beta_{+\infty}:=\lim_{t\to +\infty}\psi'(t).
\end{equation}
If $\beta=\psi'(\alpha)$ for some $\alpha\in\R$, then the supremum in~\eqref{eq:def_varphi1} is attained at $t=\alpha$ and hence,
\begin{equation}\label{eq:I_varphi_prime}
I(\psi'(\alpha))=\alpha\psi'(\alpha)-\psi(\alpha),\;\;\; \alpha\in \R.
\end{equation}
The next lemma is standard; see, e.g., \cite[Lemma~3]{kabluchko09b} for the proof.
\begin{lemma}\label{lem:I_prime}
For every $\alpha\in\R$, we have $I'(\psi'(\alpha))=\alpha$.
\end{lemma}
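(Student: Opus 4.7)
The plan is to use strict convexity of $\psi$ together with the explicit formula in~\eqref{eq:I_varphi_prime}. Since $\psi\in C^\infty(\R)$ with $\psi''>0$ (which follows from $V$ being non-degenerate), the map $\alpha\mapsto\psi'(\alpha)$ is a $C^\infty$ diffeomorphism from $\R$ onto $(\beta_{-\infty},\beta_{+\infty})$. Moreover, for $\beta\in(\beta_{-\infty},\beta_{+\infty})$ the function $t\mapsto \beta t-\psi(t)$ is strictly concave and tends to $-\infty$ at $\pm\infty$, so the supremum defining $I(\beta)$ is attained at the unique solution $t=t(\beta)$ of the first-order condition $\psi'(t)=\beta$. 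In particular, taking $\beta=\psi'(\alpha)$ gives $t(\beta)=\alpha$, which is consistent with~\eqref{eq:I_varphi_prime}.

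The cleanest route is then to differentiate~\eqref{eq:I_varphi_prime} directly. Viewing both sides as smooth functions of $\alpha$, the chain rule yields
\[
I'(\psi'(\alpha))\,\psi''(\alpha)=\psi'(\alpha)+\alpha\psi''(\alpha)-\psi'(\alpha)=\alpha\psi''(\alpha).
\]
Since $\psi''(\alpha)>0$, we may divide to obtain $I'(\psi'(\alpha))=\alpha$, as required.

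Alternatively one could invoke the envelope theorem: writing $F(\beta,t)=\beta t-\psi(t)$, the maximum $I(\beta)=F(\beta,t(\beta))$ depends on $\beta$ smoothly, and $I'(\beta)=\partial_\beta F(\beta,t(\beta))=t(\beta)$ (the first-order condition $\partial_t F=0$ kills the contribution of $t'(\beta)$). Specializing to $\beta=\psi'(\alpha)$, where $t(\beta)=\alpha$, recovers the claim. There is no real obstacle here — the only thing to be a little careful about is that $\beta=\psi'(\alpha)$ lies in the open interval $(\beta_{-\infty},\beta_{+\infty})$ on which $I$ is smooth, which is built into the definitions in~\eqref{eq:def_beta_0}, so differentiability of $I$ at $\psi'(\alpha)$ is automatic.
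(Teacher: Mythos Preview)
Your proof is correct. The paper itself does not supply a proof of this lemma but merely calls it standard and cites \cite[Lemma~3]{kabluchko09b}; your argument---differentiating the Legendre identity~\eqref{eq:I_varphi_prime} and dividing by $\psi''(\alpha)>0$---is exactly the standard one, and the envelope-theorem variant you mention is equally valid.
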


The following theorem on the precise asymptotic behavior of large deviation probabilities for sums of i.i.d.\ variables of~\citet{bahadur_rao60}, \citet{petrov65} (see Theorem~6 therein) will play a crucial role in the sequel. It is this theorem where the difference between the lattice and the non-lattice case comes into play.

\begin{theorem}\label{theo:ld}
Suppose that~\eqref{eq:def_varphi} is satisfied.
For $\beta\in(\beta_{-\infty},\beta_{+\infty})$ define $\alpha$ to be the unique solution of the equation $\psi'(\alpha)=\beta$.
Assume that the distribution of $V$ is lattice, and that Assumption~\ref{ass:non_lattice} is fulfilled for some  $h>0$. 
\begin{enumerate}
\item \label{p:ld_1}
  For every  $\beta\in n^{-1}h\Z$,
\begin{equation}
\P[S_n = n\beta]\sim \frac{he^{-n I(\beta)}}{\sqrt{2\pi \psi''(\alpha)n}},
\;\;\; n\to\infty.
\end{equation}
\item \label{p:ld_2} For every compact set $K\subset (\beta_{0},\beta_{+\infty})$, the following holds uniformly in $\beta\in n^{-1}h\Z\cap K$:
\begin{equation}
\P[S_n \geq n\beta]\sim \frac{he^{-n I(\beta)}}{(1-e^{-\alpha h})\sqrt{2\pi \psi''(\alpha)n}},\;\;\;n\to\infty.\\
\end{equation}
\item \label{p:ld_3}
For every compact set $K\subset (\beta_{-\infty},\beta_{0})$, the following holds uniformly in $\beta\in n^{-1}h\Z\cap K$:
\begin{equation}
\P[S_n \leq n\beta]\sim \frac{he^{-n I(\beta)}}{(1-e^{\alpha h})\sqrt{2\pi \psi''(\alpha)n}},\;\;\; n\to\infty.\\
\end{equation}
\end{enumerate}
\end{theorem}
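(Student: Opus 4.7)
The plan is to prove this via the classical Cram\'er exponential tilt, which is the approach of~\cite{bahadur_rao60} and~\cite{petrov65}. For fixed $\beta\in(\beta_{-\infty},\beta_{+\infty})$, let $\alpha=\alpha(\beta)$ denote the unique solution of $\psi'(\alpha)=\beta$; Lemma~\ref{lem:I_prime} and~\eqref{eq:I_varphi_prime} give $I(\beta)=\alpha\beta-\psi(\alpha)$. Introduce the tilted measure $\tilde\P$ under which the summands $V_i$ remain i.i.d.\ with Radon--Nikodym density $e^{\alpha v-\psi(\alpha)}$ against $\P$; then $\tilde\E V=\beta$, $\Var_{\tilde\P}V=\psi''(\alpha)$, and the support is still $h\Z$ with $h$ still the maximal lattice span, since the tilt does not change the support. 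For any Borel $A\subset\R$,
\begin{equation*}
\P[S_n\in A]=e^{-nI(\beta)}\,\tilde\E\bigl[e^{-\alpha(S_n-n\beta)}1_{S_n\in A}\bigr].
\end{equation*}

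For part~\ref{p:ld_1} I take $A=\{n\beta\}$. The exponential factor inside $\tilde\E$ equals $1$ and the formula reduces to $\P[S_n=n\beta]=e^{-nI(\beta)}\tilde\P[S_n=n\beta]$. Because $n\beta$ is precisely the $\tilde\P$-mean of $S_n$, Gnedenko's local central limit theorem for lattice sums, applied to the tilted i.i.d.\ sequence of variance $\psi''(\alpha)$, yields $\tilde\P[S_n=n\beta]\sim h/\sqrt{2\pi\psi''(\alpha)n}$, which is the desired asymptotic.

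For part~\ref{p:ld_2} I take $A=[n\beta,\infty)$ and set $Y_n:=S_n-n\beta\in h\Z$. Decomposing according to $Y_n=kh$ for $k\in\Z_{\geq 0}$ gives
\begin{equation*}
\P[S_n\geq n\beta]=e^{-nI(\beta)}\sum_{k=0}^{\infty}e^{-\alpha kh}\,\tilde\P[Y_n=kh].
\end{equation*}
Since $\beta>\beta_0$ forces $\alpha>0$, the weights $e^{-\alpha kh}$ are summable. For each fixed $k$ the local CLT yields $\tilde\P[Y_n=kh]\sim h/\sqrt{2\pi\psi''(\alpha)n}$, and a uniform Gaussian majorant on these local probabilities (again from Gnedenko) combined with dominated convergence lets me exchange the sum and the limit, producing $\sum_{k\geq 0}e^{-\alpha kh}=(1-e^{-\alpha h})^{-1}$ and hence the claimed equivalent. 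Part~\ref{p:ld_3} is handled identically after reflecting $V\mapsto -V$; note that $\beta<\beta_0$ now forces $\alpha<0$, which swaps $1-e^{-\alpha h}$ for $1-e^{\alpha h}$.

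The main obstacle is the uniformity in $\beta\in n^{-1}h\Z\cap K$ required in parts~\ref{p:ld_2} and~\ref{p:ld_3}. Since $K$ is a compact subset of $(\beta_0,\beta_{+\infty})$ and $\alpha(\cdot)=(\psi')^{-1}$ is continuous, the tilt parameter $\alpha$ lies in a compact subinterval of $(0,\infty)$ on which $\psi''(\alpha)$ is bounded above and below. The delicate point is a uniform version of the local CLT: one needs $\tilde\P[Y_n=kh]=h/\sqrt{2\pi\psi''(\alpha)n}+o(n^{-1/2})$ uniformly over $\beta\in K$ together with a Gaussian upper bound of the form $\tilde\P[Y_n=kh]\leq Cn^{-1/2}e^{-ck^2/n}$ uniform in $\beta\in K$ and $k\in\Z$. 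Both follow from the Esseen--Gnedenko expansion for lattice laws applied to the tilted family, whose characteristic functions depend continuously on $\alpha$ and are bounded away from $1$ outside any neighborhood of the origin. With these uniform estimates in hand, the dominated convergence argument above transfers directly and delivers the stated uniform asymptotics.
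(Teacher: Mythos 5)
The paper does not prove this theorem: it is quoted as a known result of Bahadur--Rao and Petrov (Theorem~6 in~\cite{petrov65}). Your argument --- exponential tilting to mean $\beta$, the identity $\P[S_n\in A]=e^{-nI(\beta)}\tilde\E[e^{-\alpha(S_n-n\beta)}1_{S_n\in A}]$, the lattice local CLT at the tilted mean for Part~\ref{p:ld_1}, and the geometric-series summation $\sum_{k\ge0}e^{-\alpha kh}=(1-e^{-\alpha h})^{-1}$ with a uniform $O(n^{-1/2})$ majorant for Parts~\ref{p:ld_2} and~\ref{p:ld_3} --- is exactly the standard proof of those results, and it is correct; the only genuinely technical ingredient is the local CLT holding uniformly over the compact family of tilts, which you correctly isolate and for which the Esseen--Gnedenko argument (joint continuity of the tilted characteristic functions in $(\alpha,t)$ and the uniform bound $|\tilde\phi_\alpha(t)|<1$ away from $t=0$ on $[-\pi/h,\pi/h]$, using that tilting preserves the maximal span $h$) does go through.
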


In our proofs, we will several times use an exponential change of measure. Given $t_0\in\R$, we define $\tilde V$ (dependent on $t_0$) to be a random variable with density
\begin{equation}\label{eq:def_tilde_V}
\P[\tilde V=dx]=e^{t_0 x-\psi(t_0)}\P[V=dx].
\end{equation}
Note that the right-hand side is a probability measure since $\E[e^{t_0 V- \psi(t_0)}]=1$.
\begin{lemma}\label{lem:tilde_I}
The Laplace transform $\tilde \psi$ and the information function $\tilde I$ corresponding to $\tilde V$ are given by
\begin{align}
\tilde \psi(t)&=\psi(t+t_0)-\psi(t_0),\;\;\;t\in\R, \label{eq:tilde_psi}\\
\tilde I(\beta)&=I(\beta)+\psi(t_0)-t_0 \beta, \;\;\; \beta\in(\beta_{-\infty},\beta_{+\infty}). \label{eq:tilde_I}
\end{align}
\end{lemma}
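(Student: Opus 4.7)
The plan is to verify both identities by direct computation from the definitions, exploiting the fact that the exponential tilt~\eqref{eq:def_tilde_V} is designed precisely to shift the argument of the cumulant generating function.

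For~\eqref{eq:tilde_psi}, I would simply compute the Laplace transform of $\tilde V$. Since $\P[\tilde V = dx] = e^{t_0 x - \psi(t_0)} \P[V = dx]$ is a probability measure on $\R$ (the normalization $\E[e^{t_0 V - \psi(t_0)}]=1$ is precisely~\eqref{eq:def_varphi}), we have
$$
\E[e^{t\tilde V}] = \int_{\R} e^{tx} e^{t_0 x - \psi(t_0)} \P[V=dx] = e^{-\psi(t_0)} \E[e^{(t+t_0)V}] = e^{\psi(t+t_0) - \psi(t_0)}.
$$
Taking logarithms yields~\eqref{eq:tilde_psi}, which is finite for all $t\in\R$ because $\psi$ is.

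For~\eqref{eq:tilde_I}, the plan is to substitute~\eqref{eq:tilde_psi} into the Legendre--Fenchel formula~\eqref{eq:def_varphi1} applied to $\tilde V$ and perform the change of variable $s = t + t_0$:
$$
\tilde I(\beta) = \sup_{t \in \R}\bigl(\beta t - \tilde\psi(t)\bigr) = \sup_{t\in\R}\bigl(\beta t - \psi(t+t_0) + \psi(t_0)\bigr) = \sup_{s\in\R}\bigl(\beta s - \psi(s)\bigr) - \beta t_0 + \psi(t_0),
$$
which gives $\tilde I(\beta) = I(\beta) + \psi(t_0) - t_0\beta$. The substitution is a bijection of $\R$, so the suprema agree, and the range $\beta\in(\beta_{-\infty},\beta_{+\infty})$ is unchanged because the derivatives $\tilde\psi'(t) = \psi'(t+t_0)$ sweep out the same interval as $\psi'$ does.

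There is really no main obstacle here; the lemma is a completely mechanical consequence of the definitions, recorded for later reference so that large-deviation results of Theorem~\ref{theo:ld} can be transported between $V$ and its exponential tilts.
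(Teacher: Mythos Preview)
Your argument is correct and essentially identical to the paper's: both compute $\tilde\psi$ directly from the tilted density and then obtain $\tilde I$ by substituting into the Legendre--Fenchel formula and shifting the variable by $t_0$. Your version is slightly more explicit (writing out the integral for $\tilde\psi$ and noting why the range of $\beta$ is unchanged), but the route is the same.
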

\begin{proof}
The formula for $\tilde \psi$ follows immediately from~\eqref{eq:def_tilde_V}. To prove the formula for $\tilde I$, note that by~\eqref{eq:tilde_psi},
$$
\tilde I(\beta)=\sup_{t\in\R} (\beta t-\tilde \psi(t))=\sup_{t\in\R}(\beta(t+t_0)-\psi(t+t_0))+\psi(t_0)-t_0\beta.
$$
Since the supremum on the right-hand side equals $I(\beta)$, Eqn.~\eqref{eq:tilde_I} follows.
\end{proof}

Let $\{\tilde V_i; i\in\N\}$ be independent copies of $\tilde V$ and denote by $\tilde S_n=\tilde V_1+\ldots+\tilde V_n$ their partial sums. By computing the Laplace transforms one obtains immediately that
\begin{equation}\label{eq:def_tilde_Sn}
\P[\tilde S_n=dx]=e^{t_0 x-\psi(t_0)n}\P[S_n=dx].
\end{equation}

\subsection{An auxiliary lemma}
\begin{lemma}\label{lem:aux}
Let the assumptions of Theorem~\ref{theo:stable_lattice} be fulfilled and let $\{x_n\}_{n\in\N}$ be a sequence with $\lim_{n\to\infty}x_n=x$. Then,
\begin{equation}\label{eq:tech2}
nI\left(\frac{b_n+x_n}{n}\right)
=\log \left(\frac{ N_nh}{\sqrt{2\pi\psi''(\alpha) n}}\right)+\alpha x+o(1),\;\;\; n\to\infty.
\end{equation}
\end{lemma}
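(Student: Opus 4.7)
The plan is to observe that by the very definition~\eqref{eq:def_bn_lattice} of $b_n$, we have
\[
nI\!\left(\frac{b_n}{n}\right)=\log\!\left(\frac{N_n h}{\sqrt{2\pi\psi''(\alpha)n}}\right),
\]
so the claim reduces to showing that
\[
n\Bigl[I\!\left(\tfrac{b_n+x_n}{n}\right)-I\!\left(\tfrac{b_n}{n}\right)\Bigr]=\alpha x+o(1).
\]
This is essentially a first-order Taylor expansion of $I$ around $b_n/n$, and the main work is just justifying that the expansion behaves well in the limit.

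First I would recall from the remark following Theorem~\ref{theo:stable_lattice} that $b_n\sim \psi'(\alpha)n$, so both $b_n/n$ and $(b_n+x_n)/n$ converge to $\psi'(\alpha)$, which lies in the open interval $(\beta_{-\infty},\beta_{+\infty})$ where $I$ is infinitely differentiable. Hence for all sufficiently large $n$ the points $b_n/n$ and $(b_n+x_n)/n$ belong to a fixed compact subinterval of $(\beta_{-\infty},\beta_{+\infty})$ on which $I''$ is bounded.

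Next, by Taylor's theorem with Lagrange remainder there exists $\xi_n$ between $b_n/n$ and $(b_n+x_n)/n$ such that
\[
I\!\left(\tfrac{b_n+x_n}{n}\right)=I\!\left(\tfrac{b_n}{n}\right)+I'\!\left(\tfrac{b_n}{n}\right)\frac{x_n}{n}+\tfrac12 I''(\xi_n)\frac{x_n^2}{n^2}.
\]
Multiplying by $n$ gives
\[
nI\!\left(\tfrac{b_n+x_n}{n}\right)=nI\!\left(\tfrac{b_n}{n}\right)+I'\!\left(\tfrac{b_n}{n}\right)x_n+O\!\left(\tfrac{1}{n}\right),
\]
where the error term uses the boundedness of $I''(\xi_n)$ and of $x_n$.

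Finally, by Lemma~\ref{lem:I_prime} we have $I'(\psi'(\alpha))=\alpha$, and by continuity of $I'$ on $(\beta_{-\infty},\beta_{+\infty})$ combined with $b_n/n\to\psi'(\alpha)$ and $x_n\to x$, we obtain $I'(b_n/n)x_n\to\alpha x$. Substituting the defining identity for $nI(b_n/n)$ yields~\eqref{eq:tech2}. There is no genuine obstacle here; the only point requiring mild care is the uniform control of $I''$ in a neighborhood of $\psi'(\alpha)$, which is immediate from the smoothness of $I$ on the open interval $(\beta_{-\infty},\beta_{+\infty})$.
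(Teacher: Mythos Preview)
Your proof is correct and follows essentially the same approach as the paper: both recognise that $nI(b_n/n)$ equals the logarithmic term by the definition of $b_n$, and then Taylor-expand $I$ around $b_n/n$ and use Lemma~\ref{lem:I_prime} together with $b_n/n\to\psi'(\alpha)$ to identify the first-order term as $\alpha x$. Your version is slightly more explicit in controlling the remainder via the Lagrange form and the local boundedness of $I''$, whereas the paper simply writes $o(1/n)$; this is a difference in presentation only.
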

\begin{proof}
Recall from~\eqref{eq:def_bn_lattice} that $b_n=nI^{-1}(c_n)$, where
\begin{equation}
c_n=\frac1n\log \left(\frac{ N_nh}{\sqrt{2\pi\psi''(\alpha) n}}\right).
\end{equation}
It follows from~\eqref{eq:asympt_N} that $\lim_{n\to\infty}c_n=\lambda$ and hence, $\lim_{n\to\infty} I^{-1}(c_n)=\psi'(\alpha)$. [Recall that $I(\psi'(\alpha))=\lambda$ by~\eqref{eq:I_varphi_prime}].
By Taylor's expansion of the function $I$ around the point $I^{-1}(c_n)$, we have
\begin{align}
I\left(\frac{b_n+x_n}{n}\right)
&=I\left(I^{-1}(c_n)+\frac{x_n}{n}\right)\label{eq:tech1}\\
&=c_n+I'(I^{-1}(c_n))\cdot \frac{x_n}{n}+o\left(\frac 1n\right),\;\;\; n\to\infty. \notag
\end{align}
By the continuity of $I'$ and Lemma~\ref{lem:I_prime}, we have
$$
\lim_{n\to\infty}I'(I^{-1}(c_n))x_n=I'(\psi'(\alpha))x=\alpha x.
$$
Inserting this into~\eqref{eq:tech1} completes the proof of the lemma.
\end{proof}

\subsection{Proof of~\eqref{eq:stable_latt_cond1}}
Recalling that $W_n=e^{S_n-b_n}$  and using the fact that $S_n$ takes values in $h\Z$, we have
$$
\P[W_n>\tau]=\P[S_n>b_n+\log \tau]=\P[S_n> [b_n+\log \tau]_h].
$$
Note that by~\eqref{eq:def_bn_lattice} and~\eqref{eq:asympt_N},
$$
\lim_{n\to\infty} \frac 1n [b_n+\log \tau]_h=I^{-1}(\lambda)=\psi'(\alpha)>\psi'(0)=\E V.
$$
By Theorem~\ref{theo:ld}, Parts~\ref{p:ld_1} and~\ref{p:ld_2},
\begin{align}
\lefteqn{\P[W_n>\tau]}\label{eq:P_Sn_bn_latt}\\
&\sim \frac{he^{-\alpha h}}{(1-e^{-\alpha h}) \sqrt {2\pi \psi''(\alpha) n}} \exp\left\{-n I\left(\frac{[ b_n+\log \tau]_h}{n}\right)\right\}, \;\;\; n\to\infty. \notag
\end{align}
By the assumption~\eqref{eq:def_Delta} of Theorem~\ref{theo:stable_lattice}, $\lim_{k\to\infty}\{b_{n_k}\}_h=\Delta$. Recall also that $\log \tau\notin h\Z-\Delta$. Thus,
$$
\Theta_{\Delta;\tau}:=\lim_{k\to\infty} [b_{n_k}+\log \tau]_h-b_{n_k}=[\Delta+\log \tau ]_h-\Delta.
$$
Restricting Lemma~\ref{lem:aux} to the subsequence $\{n_k\}_{k\in\N}$, we obtain
\begin{equation}\label{eq:i_beta_n_latt}
n_kI\left(\frac{[ b_{n_k}+\log \tau]_h}{n_k}\right)
=\log \left(\frac{N_{n_k}h}{\sqrt{2\pi\psi''(\alpha) n_k}}\right)+\alpha \Theta_{\Delta;\tau} +o(1),\;\;\; k\to\infty.
\end{equation}
Applying~\eqref{eq:i_beta_n_latt} to the right-hand side of~\eqref{eq:P_Sn_bn_latt}, we obtain
\begin{equation}\label{eq:wspom17a}
\lim_{k\to\infty}N_{n_k}\P[W_{n_k}>\tau]=\frac{e^{-\alpha ( \Theta_{\Delta;\tau}+h)}}{1-e^{-\alpha h}}.
\end{equation}
To see that~\eqref{eq:wspom17a} is equivalent to~\eqref{eq:stable_latt_cond1}, note that
$$
\sum_{x\in e^{h\Z-\Delta}} x^{-\alpha}1_{x>\tau}
=
\sum_{k=h^{-1}\cdot [\Delta+\log \tau]_h+1}^{\infty} e^{-\alpha(hk-\Delta)}
=
\frac{e^{-\alpha(\Theta_{\Delta;\tau}+h)}}{1-e^{-\alpha h}}.
$$

\subsection{Proof of~\eqref{eq:stable_latt_cond2}}
Since the variance of a random variable is not greater than the second moment, it suffices to show that
\begin{equation}\label{eq:111}
\lim_{\tau \downarrow 0} \limsup_{n\to\infty} N_{n}\E[W_{n}^2 1_{W_{n}\leq \tau}]=0.
\end{equation}
To estimate the truncated moment $\E[W_{n}^2 1_{W_{n}\leq \tau}]$, we will use an exponential change of measure argument.
Let $\tilde V$ and $\tilde S_n$ be defined as in~\eqref{eq:def_tilde_V} and~\eqref{eq:def_tilde_Sn} with $t_0=2$. By~\eqref{eq:def_Wn} and~\eqref{eq:def_tilde_Sn}, we have
\begin{align}
N_n \E[W_{n}^2 1_{W_{n}\leq \tau}]
&=N_n e^{-2b_n} \E[e^{2S_n}1_{S_n\leq b_n+\log\tau}]\notag\\
&=N_n e^{\psi(2)n} e^{-2b_n} \P[\tilde S_n\leq b_n+\log\tau]\label{eq:wspom_trunc1}\\
&=N_n e^{\psi(2)n} e^{-2b_n} \P[\tilde S_n  \leq [ b_n+\log\tau]_h].\notag
\end{align}
Note that by~\eqref{eq:def_bn_lattice} and~\eqref{eq:asympt_N}, we have
$$
\lim_{n\to\infty}\frac 1n [ b_n+\log\tau]_h=I^{-1}(\lambda)= \psi'(\alpha)<\psi'(2)=\E \tilde V.
$$
Let $\tau<1$ be fixed and denote by $C_1,C_2,\ldots$ constants not depending on $\tau$. By Part~\ref{p:ld_3} of Theorem~\ref{theo:ld},
\begin{equation}\label{eq:wspom_trunc2}
\P[\tilde S_n\leq [ b_n+\log\tau]_h]\sim \frac{C_1}{\sqrt{n}}\exp\left\{-n \tilde I\left(\frac{[ b_n+\log \tau]_h}{n}\right)\right\},\;\;\; n\to\infty,
\end{equation}
where $\tilde I$ is the information function corresponding to $\tilde S_n$. Let $\eps\in (0,2-\alpha)$. Note that by Lemma~\ref{lem:I_prime}, $\lim_{n\to\infty}I'(b_n/n)=I'(\psi'(\alpha))=\alpha$. By the convexity of $I$, we have for sufficiently large $n$,
\begin{equation}\label{eq:444}
nI\left(\frac{b_n+\log\tau}{n}\right)
\geq
nI\left(\frac{b_n}{n}\right)+I'\left(\frac{b_n}{n}\right)\log \tau
\geq
nI\left(\frac{b_n}{n}\right)+(\alpha+\eps)\log\tau.
\end{equation}
Note that $\tilde I$ is decreasing on $(\beta_{-\infty},\psi'(2))$. By Lemma~\ref{lem:tilde_I} and Eqns.~\eqref{eq:444}, \eqref{eq:def_bn_lattice}, we have
\begin{align}
n \tilde I\left(\frac{[ b_n+\log \tau]_h}{n}\right)
&\geq n \tilde I\left(\frac{b_n+\log\tau}{n}\right)\notag\\
&= nI\left(\frac{b_n+\log \tau}{n}\right)+\psi(2)n-2(b_n+\log\tau)\label{eq:wspom_trunc3}\\
&\geq \log \left(\frac{N_n}{\sqrt{n}}\right)+(\alpha+\eps-2)\log\tau+\psi(2)n-2b_n-C_2.\notag
\end{align}
Bringing~\eqref{eq:wspom_trunc1}, \eqref{eq:wspom_trunc2}, \eqref{eq:wspom_trunc3} together, we obtain
$$
\limsup_{n\to\infty}N_n \E[W_{n}^2 1_{W_{n}\leq \tau}]\leq C_3\tau^{2-\alpha-\eps}.
$$
Letting $\tau\downarrow 0$ and recalling that $\alpha+\eps\leq 2$ yields~\eqref{eq:111}.

\subsection{Proof of~\eqref{eq:stable_latt_cond3}}
Let $\tilde V$ and $\tilde S_n$ be the exponential twists of $V$ and $S_n$ defined as in~\eqref{eq:def_tilde_V} and~\eqref{eq:def_tilde_Sn} with $t_0=1$, i.e.,
\begin{equation}\label{eq:def_tilde_Sn1}
\P[\tilde V=dx]=e^{x-\psi(1)}\P[V=dx],
\;\;\;
\P[\tilde S_n=dx]=e^{x-\psi(1)n}\P[S_n=dx].
\end{equation}
It follows from~\eqref{eq:def_Delta} that
\begin{equation}\label{eq:def_xn}
\Theta_{\Delta;\tau}:=\lim_{k\to\infty}\theta_{n_k}=[\Delta+\log \tau ]_h-\Delta, \text{ where } \theta_n=[ b_n+\log\tau]_h-b_n.
\end{equation}

Consider first the case $\alpha\in(0,1)$. Note that in this case, $A_n=0$ by~\eqref{eq:def_an_lattice}. By~\eqref{eq:def_tilde_Sn1}, we have
\begin{align}
N_n \E[W_{n} 1_{W_{n}\leq \tau}]-B_n^{-1}A_n
&=N_n e^{-b_n}\E[e^{S_n}1_{S_n\leq b_n+\log\tau}]\notag\\
&=N_n e^{\psi(1)n} e^{-b_n} \P[\tilde S_n\leq b_n+\log\tau]\label{eq:wspom_tr_c1_1}\\
&=N_n e^{\psi(1)n} e^{-b_n} \P[\tilde S_n\leq [ b_n+\log\tau]_h].\notag
\end{align}
By~\eqref{eq:def_bn_lattice}, \eqref{eq:asympt_N}, and the assumption $\alpha\in (0,1)$,
$$
\lim_{n\to\infty}\frac 1n [ b_n+\log\tau]_h=I^{-1}(\lambda)=\psi'(\alpha)<\psi'(1)=\E\tilde V.
$$
Note that by Lemma~\ref{lem:tilde_I}, $\psi'(\alpha)=\tilde \psi'(\alpha-1)$. By Part~\ref{p:ld_3} of Theorem~\ref{theo:ld},
\begin{align}
\lefteqn{\P[\tilde S_n\leq [ b_n+\log \tau]_h]}\label{eq:wspom_tr_c1_2}\\
&\sim \frac{h} {(1-e^{(\alpha-1) h})\sqrt{2\pi \psi''(\alpha)n}} \exp \left\{-n \tilde I\left(\frac{[ b_n+\log\tau]_h}{n}\right)\right\},\;\;\; n\to\infty.\notag
\end{align}
By~\eqref{eq:def_xn}, Lemma~\ref{lem:tilde_I}, and Lemma~\ref{lem:aux}, we have
\begin{align}
\lefteqn{n_k\tilde I\left(\frac{[ b_{n_k}+\log\tau]_h}{n_k}\right)}\notag\\
&=n_k\tilde I\left(\frac{b_{n_k}+\theta_{n_k}}{n_k}\right)\notag\\
&=n_kI\left(\frac{b_{n_k}+\theta_{n_k}}{n_k}\right)+\psi(1)n_k-(b_{n_k}+\theta_{n_k})\label{eq:wspom_tr_c1_3}\\
&=\log\left(\frac{N_{n_k}h}{\sqrt{2\pi\psi''(\alpha) n_k}}\right)+(\alpha-1) \Theta_{\Delta;\tau}+\psi(1)n_k-b_{n_k}+o(1),\;\;\; k\to\infty.\notag
\end{align}
Bringing~\eqref{eq:wspom_tr_c1_1}, \eqref{eq:wspom_tr_c1_2}, \eqref{eq:wspom_tr_c1_3} together, we obtain
\begin{equation}\label{eq:C_alpha_less1}
C_{\alpha,\Delta;\tau}:=\lim_{k\to\infty}N_{n_k} \E[W_{n_k} 1_{W_{n_k}\leq \tau}]=\frac{e^{-(\alpha-1)\Theta_{\Delta;\tau}}}{1-e^{(\alpha-1) h}}.
\end{equation}

Let us consider the case $\alpha=1$.  We have $B_n^{-1}A_n=N_n\E[W_{n} 1_{W_{n}<1}]$ by~\eqref{eq:def_an_lattice}, \eqref{eq:def_bn_lattice}. Assume for concreteness that $\tau>1$. It follows from~\eqref{eq:def_tilde_Sn1} that
\begin{align*}
N_n \E[W_{n} 1_{W_{n}\leq \tau}]-B_{n}^{-1}A_{n}
&=N_n \E[W_{n} 1_{W_{n}\in [1,\tau]}]\\
&=N_n e^{-b_n}\E[e^{S_n}1_{b_n\leq S_n\leq b_n+\log\tau}]\\
&=N_n e^{\psi(1)n} e^{-b_n} \P[b_n\leq \tilde S_n\leq b_n+\log\tau].
\end{align*}
This may be written as
\begin{equation}\label{eq:wspom_tr_c2_1}
N_n \E[W_{n} 1_{W_{n}\leq \tau}]-B_{n}^{-1}A_{n}=
N_n e^{\psi(1)n} e^{-b_n}\sum_{\substack{0\leq j\leq \log\tau\\ j\in h\Z-b_n}}
\P[\tilde S_n=b_n+j].
\end{equation}
By Part~\ref{p:ld_1} of Theorem~\ref{theo:ld}, we have
\begin{align}
\P[\tilde S_n=b_n+j]
\sim
\frac{h} {\sqrt{2\pi \psi''(1)n}} \exp \left\{-n \tilde I\left(\frac{b_n+j}{n}\right)\right\},\;\;\; n\to\infty. \label{eq:wspom_tr_c2_2}
\end{align}
By Lemma~\ref{lem:tilde_I} and Lemma~\ref{lem:aux},
\begin{align}
\lefteqn{n_k \tilde I\left(\frac{b_{n_k}+j}{n_k}\right)}\notag\\
&=
n_k I\left(\frac{b_{n_k}+j}{n_k}\right)+\psi(1)n_k-(b_{n_k}+j)\label{eq:wspom_tr_c2_3}\\
&=
\log\left(\frac{N_{n_k}h}{\sqrt{2\pi\psi''(\alpha) n_k}}\right)+\psi(1)n_k-b_{n_k}+o(1),\;\;\; k\to\infty.\notag
\end{align}
Note that the right-hand side does not depend on $j$. It follows from~\eqref{eq:def_Delta} that for sufficiently large $n$, the number of summands on the right-hand side of~\eqref{eq:wspom_tr_c2_1} is equal to $h^{-1}\cdot[\log\tau+\Delta]_h$.   Using~\eqref{eq:wspom_tr_c2_1}, \eqref{eq:wspom_tr_c2_2}, \eqref{eq:wspom_tr_c2_3}, we obtain
\begin{equation}\label{eq:C_alpha_eq1}
C_{1,\Delta; \tau}:=\lim_{k\to\infty}
(N_{n_k}\E[W_{n_k} 1_{W_{n_k}\leq \tau}]-B_{n_k}^{-1}A_{n_k})
=
\frac 1h \cdot [\log\tau+\Delta]_h. 
\end{equation}

Finally, let us consider the case $\alpha\in (1,2)$. First note that by~\eqref{eq:def_an_lattice}, \eqref{eq:def_bn_lattice}, we have $B_n^{-1}A_n=N_n\E[W_n]$.
By~\eqref{eq:def_tilde_Sn1},
\begin{align}
N_n \E[W_{n} 1_{W_{n}\leq \tau}]-B_{n}^{-1}A_{n}
&=-N_n e^{-b_n}\E[e^{S_n}1_{S_n>b_n+\log\tau}]\notag\\
&=-N_n e^{\psi(1)n} e^{-b_n} \P[\tilde S_n>b_n+\log\tau]\label{eq:wspom_tr_c3_1}\\
&=-N_n e^{\psi(1)n} e^{-b_n} \P[\tilde S_n>[b_n+\log\tau]_h].\notag
\end{align}
Note that by~\eqref{eq:def_bn_lattice}, \eqref{eq:asympt_N}, and the assumption $\alpha\in (1,2)$, we have
$$
\lim_{n\to\infty}\frac 1n [ b_n+\log\tau]_h=I^{-1}(\lambda)=\psi'(\alpha)>\psi'(1)=\E\tilde V.
$$
By Lemma~\ref{lem:tilde_I}, $\psi'(\alpha)=\tilde \psi'(\alpha-1)$. By Parts~\ref{p:ld_1} and~\ref{p:ld_2} of Theorem~\ref{theo:ld},
\begin{align}
\lefteqn{\P[\tilde S_n> [b_n+\log \tau]_h]}\label{eq:wspom_tr_c3_2}\\
&\sim \frac{h} {(e^{(\alpha-1) h}-1) \sqrt{2\pi \psi''(\alpha)n}} \exp \left\{-n \tilde I\left(\frac{[ b_n+\log\tau]_h}{n}\right)\right\},\;\;\; n\to\infty. \notag
\end{align}
Recall that $\Theta_{\Delta;\tau}$ and $\theta_n$ are given by~\eqref{eq:def_xn}.
By Lemma~\ref{lem:tilde_I} and Lemma~\ref{lem:aux}, we have
\begin{align}
\lefteqn{n_k\tilde I\left(\frac{[b_{n_k}+\log\tau]_h}{n_k}\right)}\notag\\
&=n_k\tilde I\left(\frac{b_{n_k}+\theta_{n_k}}{n_k}\right)\notag\\
&=n_kI\left(\frac{b_{n_k}+\theta_{n_k}}{n_k}\right)+\psi(1)n_k-(b_{n_k}+\theta_{n_k})\label{eq:wspom_tr_c3_3}\\
&=\log\left(\frac{N_{n_k}h}{\sqrt{2\pi\psi''(\alpha) n_k}}\right)+(\alpha-1) \Theta_{\Delta;\tau}+\psi(1)n_k-b_{n_k}+o(1),\; k\to\infty.\notag
\end{align}
Bringing~\eqref{eq:wspom_tr_c3_1}, \eqref{eq:wspom_tr_c3_2}, \eqref{eq:wspom_tr_c3_3} together, we obtain
\begin{equation}\label{eq:C_alpha_larger1}
C_{\alpha,\Delta;\tau}=\lim_{k\to\infty} N_{n_k} \E[W_{n_k} 1_{W_{n_k}\leq \tau}]-B_{n_k}^{-1}A_{n_k}=\frac{e^{-(\alpha-1)\Theta_{\Delta;\tau}}}{1-e^{(\alpha-1) h}}.
\end{equation}
This completes the proof of~\eqref{eq:stable_latt_cond3} and the proof of Theorem~\ref{theo:stable_lattice}.

\subsection{Proof of Corollary~\ref{cor:tight}}
The relative compactness of the sequence~\eqref{eq:seq_Zn} (as well as the description of its weak cluster set) follow from the fact that from every increasing integer sequence we can extract a subsequence $n_k$ satisfying~\eqref{eq:def_Delta} with some $\Delta\in [0,h]$ and then apply Theorem~\ref{theo:stable_lattice}.

Let us prove that for every fixed $\alpha\in (0,2)$, the set $\{\FF_{\alpha,\Delta}; \Delta\in [0,h]\}$ is homeomorphic to a circle.  Recall from~\eqref{eq:char_func} that the logarithm of the characteristic function of  $\FF_{\alpha, \Delta}$ is given by
\begin{equation}\label{eq:char_func1}
\log \phi_{\alpha,\Delta}(u)=
iC_{\alpha,\Delta;\tau}u + \sum_{x\in  e^{h\Z-\Delta}}\left(e^{iux}-1- iux 1_{x<\tau}\right) x^{-\alpha},\;\;\; u\in\R,
\end{equation}
where $\tau>0$ is arbitrary with $\tau\notin e^{h\Z-\Delta}$.
It follows from~\eqref{eq:def_xn} that $\Theta_{0;\tau}=\Theta_{h;\tau}$. Then, Eqns.~\eqref{eq:C_alpha_less1}, \eqref{eq:C_alpha_eq1}, \eqref{eq:C_alpha_larger1} imply that $C_{\alpha,0;\tau}=C_{\alpha,h;\tau}$. Trivially, we have $e^{h\Z}=e^{h\Z-h}$. By~\eqref{eq:char_func1}, it follows from these facts that $\FF_{\alpha,0}=\FF_{\alpha,h}$. On the other hand, the L\'evy measure of $\FF_{\alpha,\Delta}$ is given by $\sum_{x\in  e^{h\Z-\Delta}} x^{-\alpha}\delta_x$, which implies that the distributions $\FF_{\alpha,\Delta}$, $\Delta\in [0,h)$, are different.

To complete the proof, we need to show that $\FF_{\alpha,\Delta}$ depends continuously (in the weak topology) on $\Delta$. Take some $\Delta_0\in [0,h]$ and choose $\tau>0$ such that $\tau\notin e^{h\Z-\Delta_0}$.  It is easily seen from~\eqref{eq:def_xn} and~\eqref{eq:C_alpha_less1}, \eqref{eq:C_alpha_eq1}, \eqref{eq:C_alpha_larger1} that $C_{\alpha, \Delta;\tau}$ is a continuous function of $\Delta$ in a neighborhood of $\Delta_0$. It follows from~\eqref{eq:char_func1} that $\lim_{\Delta\to\Delta_0} \phi_{\alpha,\Delta}(u)=\phi_{\alpha,\Delta_0}(u)$ for every $u\in\R$. By the L\'evy continuity theorem, this implies that $\FF_{\alpha,\Delta}$ is a continuous function of $\Delta$. This completes the proof of Corollary~\ref{cor:tight}.


\section*{Acknowledgements} The author is grateful to Leonid Bogachev for pointing out the references~\cite{cranston_molchanov05} and~\cite{khorunzhiy03}.

\bibliographystyle{plainnat}
\bibliography{paper15Cbib}
\end{document}